\documentclass[11pt]{article}

\usepackage[T1]{fontenc}
\usepackage{lmodern}
\usepackage{amsmath,amssymb,amsthm,mathtools}
\usepackage{enumitem}
\usepackage{geometry}
\usepackage{microtype}
\usepackage[hidelinks]{hyperref}
\usepackage{indentfirst}
\usepackage{amsmath,amsthm,tikz,float,amssymb}

\hypersetup{
	colorlinks,
	linkcolor={red!90!black},
	citecolor={green!60!black},
	urlcolor={blue!60!black}
}

\allowdisplaybreaks

\newtheorem{theorem}{Theorem}[section]
\newtheorem{lemma}[theorem]{Lemma}

\newtheorem{conjecture}[theorem]{Conjecture}

\newtheorem*{remark}{Remark}

\title{Counterexamples to two conjectures on the diameter of clique-free graphs}
\author{Hangdi Chen\thanks{Email: \texttt{chenhangdi188@126.com}}\\
\small Fujian Key Laboratory of Financial Information Processing, Putian University,\\[-2pt]
\small Putian 351100, P.R. China
\and
Yaojun Chen\thanks{Email: \texttt{yaojunc@nju.edu.cn}}\\
\small School of Mathematics, Nanjing University, Nanjing 210093, P.R. China}
\date{}

\begin{document}

\maketitle

\begin{abstract}
Erd\H{o}s et al. (JCT-B, 1989) conjectured that, for integers $r\ge 2$ and $\delta\ge 2$ with $3r-1\mid\delta$, every connected $K_{2r+1}$-free  graph of order $n$ and minimum degree $\delta$ has diameter at most $ \frac{3r-1}{r}\cdot \frac{n}{\delta}+O(1)$. Czabarka et al. (JCT-B, 2021) later proposed the following generalization: for every $k\ge 3$ and $\delta\ge\left\lceil\frac{3k}{2}\right\rceil-1$, every connected $K_{k+1}$-free graph of order $n$ and minimum degree at least $\delta$ has diameter at most $(3-\frac{2}{k})\cdot\frac{n}{\delta}+O(1)$. We disprove the latter conjecture, including its $k$-colorable version, for every $k\ge 7$ and sufficiently large $\delta$. When $k=2r\ge 8$ and $3r-1\mid\delta$, our construction also disproves the conjecture of Erd\H{o}s et al. (JCT-B, 1989).
 \vskip 1mm
\noindent{\bf Keywords:} Diameter, Minimum degree, Clique-free graphs.
\end{abstract}

\section{Introduction}

In this paper, all graphs considered are finite and simple. For a graph $G$, let $V(G)$ and  $E(G)$ denote its \emph{vertex set} and \emph{edge set}, respectively. The \emph{order} of a graph $G$ is $|V(G)|$. For $v\in V(G)$, let $N_G(v)$ be the set of vertices adjacent to $v$, and write $d_G(v)=|N_G(v)|$. The \emph{minimum degree} of $G$ is $\delta (G)=\min\{d_G(v)\mid v\in V(G)\}$. For any $u,v\in V(G)$, the \emph{distance} $d_G(u,v)$ is the minimum length of a $u$--$v$ path in $G$. When $G$ is connected, its \emph{diameter} is $\textup{diam}(G)=\textup{max}_{u,v\in V(G)}~d_G(u,v)$. If no confusion can occur, we will omit the subscript $G$.

The study of diameter bounds in terms of order and  minimum degree goes back to the following classical result, which was proved independently in several papers \cite{Amar1983Germa,Erdos1989Tuza,Goldsmith1981Faber,Moon1965}.

\begin{theorem}\label{ThmConnect}
For every connected graph $G$ of order $n$ and  $\delta(G)=\delta\ge 2$, we have $\textup{diam}(G)\le\frac{3n}{\delta+1}+O(1)$ as $n\to\infty$. 
\end{theorem}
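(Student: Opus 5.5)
The plan is the standard BFS-layer counting argument. Fix vertices $u,v$ with $d(u,v)=D=\textup{diam}(G)$. For $0\le i\le D$ let $L_i=\{x\in V(G): d(u,x)=i\}$ be the distance layers from $u$; each $L_i$ is nonempty. Every edge of $G$ joins vertices in the same or in consecutive layers. Hence for $x\in L_i$ we have $N(x)\cup\{x\}\subseteq L_{i-1}\cup L_i\cup L_{i+1}$ (with $L_{-1}=L_{D+1}=\emptyset$), and therefore
\[
|L_{i-1}|+|L_i|+|L_{i+1}|\ \ge\ \delta+1\qquad\text{for every } 0\le i\le D .
\]

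Next partition the layers into disjoint consecutive triples centred at $i=1,4,7,\dots,3t+1$, where $3t+1\le D-1$. Each triple $L_{3j}\cup L_{3j+1}\cup L_{3j+2}$ contains a vertex of the middle layer together with its closed neighbourhood, so it has at least $\delta+1$ vertices. The triples are pairwise disjoint, and there are $t+1\ge \lfloor (D+1)/3\rfloor$ of them, up to an additive constant. Summing gives $n\ge \frac{D-O(1)}{3}(\delta+1)$, that is,
\[
D\le \frac{3n}{\delta+1}+O(1).
\]
The $O(1)$ term absorbs the at most two leftover end layers, each of which has size at least $1$.

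There is no real obstacle here; the only care needed is the boundary bookkeeping. I would make sure the triples chosen have their middle layer strictly inside $[0,D]$, so that every layer of each triple lies in the range and the neighbourhood bound applies. The hypothesis $\delta\ge2$ is not essentially used beyond ensuring the statement is meaningful. If a sharper additive constant is wanted, the leftover layers can also be counted, but this is not needed for the $O(1)$ form stated.
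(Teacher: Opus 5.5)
Your proof is correct. The paper does not prove this statement itself: it quotes it as a classical result from the cited references, and your distance-layer counting is the standard argument behind it.

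One bookkeeping point: take $t=\lfloor (D-2)/3\rfloor$ explicitly as the largest admissible index. Then there are exactly $\lfloor (D+1)/3\rfloor\ge (D-1)/3$ disjoint triples, so $n\ge \frac{(D-1)(\delta+1)}{3}$ and hence $D\le\frac{3n}{\delta+1}+1$, with an absolute constant and no ``up to an additive constant'' hedging needed.

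If you also use the end blocks $L_0\cup L_1\supseteq N[u]$ and $L_{D-1}\cup L_D\supseteq N[v]$, and tile only $L_2,\dots,L_{D-2}$ by triples, the same count gives the sharper classical form $D\le\frac{3n}{\delta+1}-1$.
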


 A graph is \emph{$H$-free} if it does not contain a copy of $H$ as a subgraph. Let $K_n$ denote the \emph{complete graph} on $n$ vertices. A graph is $k$-\emph{colorable} if its vertices admit a proper coloring with at most $k$ colors. The extremal constructions for Theorem \ref{ThmConnect} contain complete subgraphs, and the bound remains sharp even among regular graphs \cite{Cacetta1992Smyth}. Motivated by this observation, Erd\H{o}s, Pach, Pollack, and Tuza \cite{Erdos1989Tuza} proposed stronger bounds for clique-free graphs.

\begin{conjecture}\label{Conj1989Erdos}
\textup{(Erd\H{o}s et al. \cite{Erdos1989Tuza})}.  Let $r,\delta\ge 2$ be two integers and let $G$ be a connected graph of order $n$ with $\delta(G)=\delta$.

\vskip 2mm
\textup{(i)} If $G$ is $K_{2r}$-free and $\delta$ is a multiple of $(r-1)(3r+2)$, then
\[\textup{diam}(G)\le \frac{2(r-1)(3r+2)}{2r^2-1}\cdot \frac{n}{\delta}+O(1)~\textup{as}~n\to\infty.\]

\textup{(ii)} If $G$ is $K_{2r+1}$-free and $\delta$ is a multiple of $3r-1$, then
\[\textup{diam}(G)\le \frac{3r-1}{r}\cdot \frac{n}{\delta}+O(1)~\textup{as}~n\to\infty.\]
\end{conjecture}

Conjecture \ref{Conj1989Erdos} is also recorded in the collection of  Erd\H{o}s problems compiled by Chung and Graham \cite{Chung1998Graham}. The extension of part (ii) to $r=1$, namely the triangle-free case, was proved by  Erd\H{o}s, Pach, Pollack, and Tuza \cite{Erdos1989Tuza}.

 Part (i) is now known to be false. Czabarka, Singgih, and Sz\'{e}kely \cite{Czabarka2021Szekely} constructed counterexamples to part (i) for every $r\ge 2$ and $\delta>2(r-1)(3r+2)(2r-3)$. Cambie and Jooken \cite{Cambie2025Jooken} later extended this negative result to the case $r=2$ and $\delta=2(r-1)(3r+2)(2r-3)$. Motivated by the first family of counterexamples, Czabarka, Singgih, and Sz\'{e}kely \cite{Czabarka2021Szekely} proposed the following modification.

\begin{conjecture}\label{Conj2021Czaba}
\textup{(Czabarka et al. \cite{Czabarka2021Szekely})}.  For every $k\ge 3$ and $\delta\ge\lceil\frac{3k}{2}\rceil-1$, if $G$ is a connected $K_{k+1}$-free (weaker version: $k$-colorable)  graph of order $n$ and $\delta(G)\ge \delta$, then $\textup{diam}(G)\le (3-\frac{2}{k})\cdot\frac{n}{\delta}+O(1)$ as $n\to\infty$. 
\end{conjecture}

For $k=2r$, Conjecture \ref{Conj2021Czaba} has the same forbidden clique and the same coefficient as Conjecture \ref{Conj1989Erdos} (ii). If $\delta$ is a multiple of $3r-1$, Conjecture \ref{Conj2021Czaba} contains the corresponding assertion of Conjecture \ref{Conj1989Erdos} (ii). For $k\in \{3,4\}$, the $k$-colorable version of Conjecture \ref{Conj2021Czaba} was established in \cite{Czabarka2009Szekely,Czabarka2023Szekely}.
\vskip 2mm
In this paper, we disprove Conjecture \ref{Conj2021Czaba}, including its weaker version, for every $k\ge 7$. In Section \ref{erdoCount}, we construct counterexamples to Conjecture \ref{Conj2021Czaba} for every $k=2r\ge 8$ and every $\delta\ge 6(6r-5)(2r-1)(3r-1)$. If $3r-1$ divides $\delta$, these constructions also yield counterexamples to Conjecture \ref{Conj1989Erdos} (ii) for every $r\ge 4$. In Section \ref{czabCount}, we construct counterexamples to Conjecture \ref{Conj2021Czaba} for every $k=2r-1\ge 7$ and $\delta\ge 5k^4$.


At the end of this section, we define the common language for the two constructions. A \emph{layered clique graph} $H$ is one with an ordered partition \[V(H)=L_1\sqcup L_2\sqcup\cdots\sqcup L_t,\]
into nonempty sets, called \emph{layers}, in which two distinct vertices are adjacent if and only if they lie in the same layer or in consecutive layers. Thus $L_i\cup L_{i+1}$ induces a clique for every $i\in\{1,\ldots,t-1\}$, and there are no edges between layers whose indices differ by at least two. A \emph{weighted graph} is a graph in which every vertex is assigned a positive integer weight. For a weighted layered clique graph $H$ and every $\ell\in\{1,\ldots,t\}$, let $A_{\ell}$ denote the total weight of the layer $L_{\ell}$. For any vertex $u\in V(H)$, let $W(u)$ denote the sum of the weights of the neighbors of $u$ in $H$. 
 
 The \emph{blow-up} of a weighted graph $H$ is obtained by replacing every vertex $v\in V(H)$ by an independent set of size equal to its weight and every edge $uv\in E(H)$ by all edges between the two corresponding independent sets. We call the independent set replacing $v$ the \emph{blow-up class} of $v$.

\section{Counterexamples for even $k$}\label{erdoCount}

Fix positive integers $p$, $r$, and $\delta$ such that $r\ge 4$ and \[\delta\ge 6(6r-5)(2r-1)(3r-1).\] Set $\tau=2r-1$, $d=3r-1$, $\lambda=\left\lceil \frac{\delta}{d}\right\rceil$, and $x=\frac{\delta}{\tau}$.
 Furthermore, define $a_3=\lfloor x\rfloor$, $a_1=\lceil x\rceil$,  $\eta=\delta-\lambda-\tau$, $z=a_1+a_3-\lambda-\tau$, and $g=\left\lceil\frac{\eta}{z}\right\rceil$. Finally, for each $i\in \{1,\ldots,2r-3\}$, let $b_i=\delta-\lfloor ix\rfloor$ and $c_i=\lfloor(i+1)x\rfloor$.


We now construct a weighted layered clique graph $J_{p,r}$ with $p(6r-5)+5$ layers, denoted by $L_1,L_2,\ldots,L_{p(6r-5)+5}$. The following list specifies both the number of vertices in each layer and their weights. \\

\noindent\textbf{(A)} Each of $L_1$ and $L_{p(6r-5)+5}$ contains $2r-1$ vertices, all of weight $\delta$. Each of $L_2$ and $L_{p(6r-5)+4}$ consists of a vertex of weight $\delta$.

\noindent\textbf{(B)} For every $i\in\{1,\ldots,2r-2\}$ and $j\in\{0,1,\ldots,p-1\}$,  the layer $L_{3i+j(6r-5)}$ consists of a vertex of weight $1$. In addition, the layer $L_{p(6r-5)+3}$ consists of a vertex of weight $1$.

\noindent\textbf{(C)} For every $i\in\{1,\ldots,2r-3\}$ and $j\in\{0,1,\ldots,p-1\}$, the layer $L_{3i+1+j(6r-5)}$ has $2r-1-i$ vertices. Their weights belong to $\left\{\left\lfloor\frac{b_{i}}{2r-1-i}\right\rfloor,\left\lceil\frac{b_{i}}{2r-1-i}\right\rceil\right\}$ and sum to  $b_{i}$. 

\noindent\textbf{(D)} For every $i\in\{1,\ldots,2r-3\}$ and  $j\in\{0,1,\ldots,p-1\}$, the layer $L_{3i+2+j(6r-5)}$ contains $i+1$ vertices. Their weights belong to $\left\{\left\lfloor\frac{c_i}{i+1}\right\rfloor,\left\lceil\frac{c_i}{i+1}\right\rceil\right\}$ and sum to $c_{i}$. 

\noindent\textbf{(E)} For every $i\in\{1,3\}$ and $j\in\{0,1,\ldots,p-1\}$, the layer $L_{6r-6+i+j(6r-5)}$ contains three vertices. Their weights belong to $\left\{\left\lfloor \frac{a_i}{3}\right\rfloor,\left\lceil \frac{a_i}{3}\right\rceil\right\}$ and sum to $a_i$. 

\noindent\textbf{(F)} For every $j\in\{0,1,\ldots,p-1\}$, the layer $L_{6r-4+j(6r-5)}$ contains $g$ vertices. Their weights belong to $\left\{\left\lfloor\frac{\eta}{g}\right\rfloor,\left\lceil\frac{\eta}{g}\right\rceil\right\}$ and sum to $\eta$. \\

Before proving that every displayed weight is positive, we bound the number of vertices required in the layers of type \textbf{(F)}.
\begin{lemma}\label{lemGBound}
$1\le g\le 2r-3$.
\end{lemma}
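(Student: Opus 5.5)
The plan is to reduce both inequalities to linear estimates in $\delta$, using only the elementary bounds on the floors and ceilings, and then let the hypothesis $\delta\ge 6(6r-5)(2r-1)(3r-1)$ absorb all the additive error terms. By definition $g=\lceil \eta/z\rceil$. Hence $g\ge 1$ follows once $\eta>0$ and $z>0$. The bound $g\le 2r-3$ is equivalent, given $z>0$, to $\eta\le (2r-3)z$, since $2r-3$ is an integer.

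\textbf{Step 1 (elementary bounds).} From $\lambda=\lceil\delta/d\rceil$ we get $\delta/d\le\lambda<\delta/d+1$. Since $a_1=\lceil x\rceil$ and $a_3=\lfloor x\rfloor$, we have $2x-1<a_1+a_3\le 2x+1$. In fact $a_1+a_3=2x$ when $x\in\mathbb Z$ and $a_1+a_3=2\lfloor x\rfloor+1$ otherwise, so it suffices to use $a_1+a_3> 2x-1$ and $a_1+a_3\le 2x+1$.

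\textbf{Step 2 (positivity).} First, $\eta=\delta-\lambda-\tau>\delta(1-1/d)-1-\tau$, which is positive because $\delta$ is far larger than $\tau+1$. Next,
\[
z> \frac{2\delta}{\tau}-1-\frac{\delta}{d}-1-\tau=\delta\cdot\frac{2d-\tau}{\tau d}-(\tau+2)=\delta\cdot\frac{4r-1}{(2r-1)(3r-1)}-(2r+1),
\]
and this is positive by the hypothesis on $\delta$. This gives $g\ge1$.

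\textbf{Step 3 (upper bound, the main step).} We must show $(2r-3)z-\eta\ge 0$. Using the lower bound on $z$ from Step 2 and the upper bound $\eta\le \delta-\delta/d-\tau$ (from $\lambda\ge\delta/d$), we obtain
\[
(2r-3)z-\eta> \delta\left(\frac{(2r-3)(4r-1)}{(2r-1)(3r-1)}-\frac{3r-2}{3r-1}\right)-(2r-3)(2r+1)+\tau .
\]
The coefficient of $\delta$ equals
\[
\frac{(2r-3)(4r-1)-(3r-2)(2r-1)}{(2r-1)(3r-1)}=\frac{2r^2-7r+1}{(2r-1)(3r-1)} .
\]
Its numerator is positive exactly because $r\ge4$ (for $r=4$ it equals $5$). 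This is the crux, and it explains the restriction $r\ge4$. It then remains to check that $\delta(2r^2-7r+1)\ge (2r-1)(3r-1)(2r-3)(2r+1)$. Since $2r^2-7r+1\ge 5$ and $(2r-3)(2r+1)<6(6r-5)\cdot 5$ for the relevant $r$, the assumed lower bound on $\delta$ comfortably suffices. A cruder comparison, for instance $2r^2-7r+1\ge r/4$ for $r\ge4$ against $(2r-3)(2r+1)\le 4r^2$, should be enough; I would verify this constant carefully.

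I expect the only delicate point to be keeping the floor and ceiling errors on the correct side in Step 3. Specifically, one must use the lower bound for $a_1+a_3$ and $\lambda$ inside $z$, and the lower bound for $\lambda$ inside $\eta$. Apart from that, the sign of $2r^2-7r+1$ carries the argument.
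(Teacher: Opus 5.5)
Your proposal is correct and follows essentially the paper's route. The paper likewise proves $\eta>0$ and $(\tau-2)z-\eta>0$ by a linear lower bound, but parametrized by $\lambda$ instead of $\delta$, using $a_1\ge a_3$, $\delta\le\lambda d$ and $a_3\ge\frac{(\lambda-1)d+2-\tau}{\tau}$; its leading coefficient $\frac{\tau}{2}-\frac{5}{2}-\frac{2}{\tau}$, divided by $d$, is exactly your $\frac{2r^2-7r+1}{(2r-1)(3r-1)}$.

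One small slip: your first justification $(2r-3)(2r+1)<30(6r-5)$ is false for $r\ge 46$. Your cruder comparison does close the argument, since $\delta\cdot\frac{r}{4}\ge\frac{3r(6r-5)}{2}(2r-1)(3r-1)\ge 4r^2(2r-1)(3r-1)\ge(2r-3)(2r+1)(2r-1)(3r-1)$.
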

\begin{proof}
We first establish the inequalities needed to bound $g$.  Since $a_1\ge a_3$,  we have $z\ge 2a_3-\lambda-\tau$. Moreover,  $r\ge 4$ gives $\tau=2r-1\ge 7$, and hence \[\frac{\tau}{2}-\frac{5}{2}-\frac{2}{\tau}\ge 1-\frac{2}{7}=\frac{5}{7}.\]
Because $d=3r-1<2\tau$ and $\tau-2<\tau$, we also have \[\frac{2(\tau-2)(d+\tau-2)}{\tau}<6\tau.\]
The definition $ \lambda=\left\lceil\frac{\delta}{d}\right\rceil$ yields $(\lambda-1)d<\delta\le \lambda d$. The assumed lower bound on $\delta$,  together with $6r-5=3\tau-2$, gives $\lambda\ge  6(3\tau-2)\tau$. Write $\delta=\tau a_3+\phi$, where $0\le\phi\le \tau-1$. Then \[a_3=\frac{\delta-\phi}{\tau}\ge\frac{(\lambda-1)d+2-\tau}{\tau}.\]
Combining these estimates, we obtain
\begin{align*}
(\tau-2)z-\eta&\ge (\tau-2)(2a_3-\lambda-\tau)-\delta+\lambda+\tau\\
&= (\tau-2)(2a_3-\lambda)-\delta+\lambda-\tau(\tau-3)\\
&\ge \lambda\left(\frac{\tau}{2}-\frac{5}{2}-\frac{2}{\tau}\right)-\frac{2(\tau-2)(d+\tau-2)}{\tau}-\tau(\tau-3)\\
&\ge \frac{30}{7}(3\tau-2)\tau-6\tau-\tau(\tau-3)=\frac{1}{7}\tau(83\tau-81)>0.
\end{align*}
On the other hand, $\lambda\le \frac{\delta}{d}+1$, so
\[\eta\ge \delta-\frac{\delta}{d}-2r\ge 6(3r-2)(6r-5)(2r-1)-2r>0.\]
It follows from $(\tau-2)z>\eta>0$ that $z>0$ and $\frac{\eta}{z}<\tau-2$. Since $g=\left\lceil\frac{\eta}{z}\right\rceil$, we have $1\le g\le \tau-2\le 2r-3$.
\end{proof}

Let $c_0=\lfloor x\rfloor$ and $y=a_3-\lambda-\tau+1$. For every $i\in \{1,\ldots,2r-3\}$, let $t_i=1+c_i-c_{i-1}$.  The following estimates will be used in the degree calculation. 

\begin{lemma}\label{lemWeightBound}
 The following statements hold.

\noindent\textup{(i)} For every $i\in \{1,\ldots,2r-3\}$, $b_i\le (\tau-i)t_i$ and $c_i\le (i+1)t_i$.

\noindent\textup{(ii)}  $a_3<3y$, $a_1\le 3y$ and $\eta\le gz$. 
\end{lemma}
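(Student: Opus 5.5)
We verify each inequality directly from the definitions, using only elementary floor estimates together with the size bounds on $\lambda$ and $\delta$ from the proof of Lemma~\ref{lemGBound}.

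\textbf{Part (i).} First compute $t_i$. Since $c_i=\lfloor (i+1)x\rfloor$ and $c_{i-1}=\lfloor ix\rfloor$ (this also holds for $i=1$, because $c_0=\lfloor x\rfloor$), the difference $c_i-c_{i-1}$ lies between $x-1$ and $x+1$. Hence $t_i\ge x$, and more precisely $t_i>x$ unless $c_i-c_{i-1}=\lfloor x\rfloor$ exactly.

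For the bound on $b_i$, use $\delta=\tau x$ to write
\[b_i=\delta-\lfloor ix\rfloor=\tau x-\lfloor ix\rfloor,\qquad (\tau-i)t_i=(\tau-i)\bigl(1+\lfloor (i+1)x\rfloor-\lfloor ix\rfloor\bigr).\]
It then suffices to show
\[\tau x\le (\tau-i)\bigl(1+\lfloor (i+1)x\rfloor\bigr)-(\tau-i-1)\lfloor ix\rfloor .\]
Replace each floor by its argument minus a fractional part lying in $[0,1)$. The right-hand side becomes $\tau x+(\tau-i)-(\tau-i)\{(i+1)x\}+(\tau-i-1)\{ix\}$. This is at least $\tau x+(\tau-i)\bigl(1-\{(i+1)x\}\bigr)>\tau x$, since $\tau-i-1\ge 0$.

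The bound $c_i\le (i+1)t_i$ follows the same way. We have $(i+1)t_i=(i+1)+(i+1)c_i-(i+1)c_{i-1}$, so we need
\[i\,c_i+(i+1)\ge (i+1)\lfloor ix\rfloor .\]
Expanding the fractional parts gives $i(i+1)x-i\{(i+1)x\}+(i+1)$ on the left and $(i+1)ix-(i+1)\{ix\}$ on the right. The inequality then reduces to $i+1>i\{(i+1)x\}$, which is true.

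\textbf{Part (ii).} Since $y=a_3-\lambda-\tau+1$, the inequalities $a_3<3y$ and $a_1\le 3y$ both follow once we show $2a_3\ge 3\lambda+3\tau-3+1$, using $a_1\le a_3+1$. This reduces to showing that $a_3$ is at least about $\tfrac32\lambda$ plus a constant.

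From the proof of Lemma~\ref{lemGBound} we have $a_3\ge \frac{(\lambda-1)d+2-\tau}{\tau}$. Here
\[\frac{d}{\tau}=\frac{3r-1}{2r-1}=\frac32+\frac{1}{2(2r-1)}.\]
Therefore $2a_3-3\lambda\ge \frac{\lambda}{\tau}-O(1)$, where the $O(1)$ term is bounded in terms of $\tau$. Since $\lambda\ge 6(3\tau-2)\tau$, the excess $\lambda/\tau\ge 6(3\tau-2)$ dominates $3\tau+O(1)$.

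I expect this bookkeeping of constants to be the main obstacle. I would carry it out explicitly, following the style of the display in the proof of Lemma~\ref{lemGBound}.

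Finally, $\eta\le gz$ is immediate. By Lemma~\ref{lemGBound}'s proof, $z>0$, and $g=\lceil \eta/z\rceil\ge \eta/z$, so multiplying by $z$ gives $\eta\le gz$.
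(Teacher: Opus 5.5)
Your proof is correct. Part (ii) follows the paper's argument; part (i) takes a different, more computational route.

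\textbf{Part (i).} The paper's argument rests on one observation. By superadditivity of the floor, $c_i-c_{i-1}\ge\lfloor x\rfloor$, so the integer $t_i$ satisfies $t_i\ge\lceil x\rceil$. The bound $c_i\le(i+1)x\le(i+1)t_i$ then follows at once. Since $\delta$ is an integer,
\[b_i=\delta-\lfloor\delta-(\tau-i)x\rfloor=\lceil(\tau-i)x\rceil\le(\tau-i)\lceil x\rceil\le(\tau-i)t_i.\]

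You instead write every floor as its argument minus a fractional part and check both inequalities exactly. Your algebra is right: the coefficient of $x$ collapses to $\tau$, leaving the positive slack $(\tau-i)(1-\{(i+1)x\})+(\tau-i-1)\{ix\}$.

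Your version is fully mechanical and exhibits the exact slack. The paper's is shorter and isolates the single fact that drives both bounds. You already had that fact: you noted $t_i\ge x$, and integrality of $t_i$ upgrades this to $t_i\ge\lceil x\rceil$.

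A small wording point: in the $c_i$ bound the inequality is $(i+1)(1+\{ix\})\ge i\{(i+1)x\}$. So $i+1>i\{(i+1)x\}$ is a sufficient condition, not an equivalent reduction.

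\textbf{Part (ii).} You use the same ingredients as the paper:
\begin{itemize}
\item the lower bound $a_3\ge\frac{(\lambda-1)d+2-\tau}{\tau}$;
\item integrality, giving $a_1\le a_3+1\le 3y$;
\item $g\ge\eta/z$ with $z>0$.
\end{itemize}

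The bookkeeping you deferred needs more than an error term \emph{bounded in terms of $\tau$}. The available margin $\lambda/\tau\ge 6(3\tau-2)$ is only linear in $\tau$, so an error of order $\tau$ with a large constant would break the argument. In fact the error term is exactly $\frac{2(d+\tau-2)}{\tau}$, which is less than $6$ because $d<2\tau$. Hence
\[3y-a_3=2a_3-3\lambda-3\tau+3>6(3\tau-2)-6-3\tau+3=15\tau-15>0,\]
which is precisely the paper's display.
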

\begin{proof}
For part (i), observe that $c_i-c_{i-1}\ge \lfloor x\rfloor$. So $t_i\ge \lceil x\rceil$. Consequently, $c_i=\lfloor (i+1)x\rfloor\le (i+1)x\le (i+1)t_i$. The integrality of $\delta=\tau x$ gives \[b_i=\delta-\lfloor ix\rfloor=\delta-\lfloor\delta-(\tau-i)x\rfloor=\lceil (\tau-i)x\rceil\le (\tau-i)\lceil x\rceil\le (\tau-i)t_i.\] 

We now prove part (ii). By the proof of Lemma \ref{lemGBound}, we have $d<2\tau$, $\lambda\ge 6(3\tau-2)\tau$ and $a_3\ge \frac{(\lambda-1)d+2-\tau}{\tau}$. Therefore, 
\begin{align*}
3y-a_3&=2a_3-3\lambda-3\tau+3\ge \frac{\lambda}{\tau}-\frac{2(d+\tau-2)}{\tau}-3\tau+3\\
&> 6(3\tau-2)-\frac{2(3\tau-2)}{\tau}-3\tau+3>15\tau-15>0.
\end{align*}
Thus $a_3<3y$. Since $a_3$ and $3y$ are integers, $a_3+1\le 3y$, and hence $a_1\le a_3+1\le 3y$. Finally, Lemma \ref{lemGBound} gives $z>0$ and $\eta>0$. And  $g=\left\lceil\frac{\eta}{z}\right\rceil\ge \frac{\eta}{z}$ implies $\eta\le gz$. 
\end{proof}

We next establish the structural and numerical properties of $J_{p,r}$ needed for the main even-case construction. 

\begin{lemma}\label{lemPositive}
All weights assigned in $J_{p,r}$ are positive integers.
\end{lemma}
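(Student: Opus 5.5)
We check the weights class by class as listed in (A)--(F). Each weight is either an explicit integer ($\delta$ or $1$) or the floor/ceiling of a quotient $S/m$, where $S\in\{b_i,c_i,a_1,a_3,\eta\}$ and $m$ is the number of vertices in that layer. A multiset of $m$ integers from $\{\lfloor S/m\rfloor,\lceil S/m\rceil\}$ summing to $S$ exists whenever $m\ge 1$ and $S$ is an integer. The smallest possible entry is $\lfloor S/m\rfloor$, which is at least $1$ precisely when $S\ge m$. So for each of (C)--(F) it suffices to show that the layer size $m$ is a positive integer and that $S\ge m$. Layers (A) and (B) are immediate, since $\delta\ge 2$ and $1>0$.

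\textbf{(C).} Here $m=2r-1-i=\tau-i\ge 2$ for $i\le 2r-3$. As in the proof of Lemma \ref{lemWeightBound}, $b_i=\lceil(\tau-i)x\rceil\ge(\tau-i)x$. Since $x=\delta/\tau\ge 1$, by the assumed lower bound on $\delta$, we get $b_i\ge \tau-i$.

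\textbf{(D).} Here $m=i+1$ and $c_i=\lfloor (i+1)x\rfloor\ge (i+1)\lfloor x\rfloor\ge i+1$, because $\lfloor x\rfloor\ge 1$.

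\textbf{(E).} Here $m=3$, and we need $a_1\ge a_3\ge 3$. This holds since $a_3=\lfloor\delta/\tau\rfloor$ and $\delta\ge 6(6r-5)(2r-1)(3r-1)$ is far larger than $3\tau$.

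\textbf{(F).} Here $m=g$, and Lemma \ref{lemGBound} gives $1\le g\le 2r-3$, so the layer is nonempty and well defined. We need $\eta\ge g$. The proof of Lemma \ref{lemGBound} shows $\eta\ge 6(3r-2)(6r-5)(2r-1)-2r$, which is vastly larger than $2r-3\ge g$. It also shows $\eta>0$ and $z>0$, so $g$ is a well-defined positive integer. Integrality of all the sums $\delta,\lambda,\tau,a_i,b_i,c_i,\eta$ is clear from their definitions via floors and ceilings.

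The only step needing any care is (F). There one must use both conclusions of Lemma \ref{lemGBound}: that $g$ is a positive integer, and that $\eta$ dominates $g$. Both come directly from that lemma and the lower bound on $\eta$ in its proof, so no real obstacle is expected. The remaining cases only use $x\ge 1$.
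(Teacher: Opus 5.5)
Your proposal is correct and takes essentially the same route as the paper. Like the paper, you reduce (C)--(F) to checking $S\ge m$, using $b_i=\lceil(\tau-i)x\rceil$, $c_i=\lfloor(i+1)x\rfloor$, the size of $x$, and Lemma~\ref{lemGBound}; the only cosmetic difference is in (F), where the paper gets $g\le\eta$ from $g=\lceil \eta/z\rceil$ and $z\ge 1$, while you compare $g\le 2r-3$ with the explicit lower bound on $\eta$, and either works.
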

\begin{proof}
From the proof of Lemma \ref{lemWeightBound} (i), we have $b_i=\lceil (\tau-i)x\rceil$.
The hypothesis on $\delta$ implies $x\ge 6(6r-5)(3r-1)\ge 1254$. Hence $b_i\ge 2r-1-i$, and therefore $\left\lfloor\frac{b_i}{2r-1-i}\right\rfloor$ and $\left\lceil\frac{b_i}{2r-1-i}\right\rceil$ are positive. Similarly, $c_i=\lfloor(i+1)x\rfloor\ge i+1$.  So the two weights used in part \textbf{(D)} are positive. 

Since $x\ge 1254$, we have $a_1\ge a_3 \ge 3$. So the weights in part \textbf{(E)} are positive. Finally, Lemma \ref{lemGBound} gives $z\ge 1$ and $\eta>0$. Thus $1\le g=\lceil\frac{\eta}{z}\rceil\le \eta$, and the two weights in part \textbf{(F)} are positive as well. 
\end{proof}

We next verify the chromatic property of $J_{p,r}$.
\begin{lemma}\label{lemChromatic}
The graph $J_{p,r}$ is $2r$-colorable.
\end{lemma}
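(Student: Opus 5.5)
The plan is to use only the layered structure of $J_{p,r}$. In a layered clique graph every edge lies inside a layer or between consecutive layers. It therefore suffices to assign to each layer $L_\ell$ a set $S_\ell\subseteq\{1,\ldots,2r\}$ of $|L_\ell|$ distinct colors with $S_\ell\cap S_{\ell+1}=\emptyset$ for every $\ell$, and to give the vertices of $L_\ell$ the colors of $S_\ell$ bijectively. Such an assignment is proper, since two adjacent vertices are either in the same layer (distinct colors by bijectivity) or in consecutive layers (disjoint color sets). I will build the sets greedily from $L_1$ to $L_{p(6r-5)+5}$: having chosen $S_\ell$, pick $S_{\ell+1}$ inside $\{1,\ldots,2r\}\setminus S_\ell$. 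This is possible exactly when $|L_\ell|+|L_{\ell+1}|\le 2r$. So the whole proof reduces to the claim that every pair of consecutive layers has at most $2r$ vertices in total, that is, $J_{p,r}$ has clique number at most $2r$.

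The main work is bookkeeping: I first check that rules \textbf{(A)}--\textbf{(F)} describe every layer exactly once. For each $j\in\{0,\ldots,p-1\}$ the $j$-th block consists of the layers $L_{3+j(6r-5)},\ldots,L_{6r-3+j(6r-5)}$:
\begin{itemize}[leftmargin=*]
\item layers $L_{3i+j(6r-5)}$ of size $1$ for $1\le i\le 2r-2$;
\item layers $L_{3i+1+j(6r-5)}$ of size $2r-1-i$ and $L_{3i+2+j(6r-5)}$ of size $i+1$ for $1\le i\le 2r-3$;
\item then $L_{6r-5+j(6r-5)}$ of size $3$, $L_{6r-4+j(6r-5)}$ of size $g$, and $L_{6r-3+j(6r-5)}$ of size $3$.
\end{itemize}
Since $6r-3+j(6r-5)+1=3+(j+1)(6r-5)$, consecutive blocks abut. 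The first block is preceded by $L_1$ (size $2r-1$) and $L_2$ (size $1$). The last block ends at $L_{p(6r-5)+2}$ and is followed by $L_{p(6r-5)+3}$, $L_{p(6r-5)+4}$ and $L_{p(6r-5)+5}$, of sizes $1$, $1$ and $2r-1$.

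Next I list the consecutive-pair sums:
\begin{itemize}[leftmargin=*]
\item at the start, $(2r-1)+1$ and $1+1$;
\item inside a block, $1+(2r-1-i)=2r-i$, then $(2r-1-i)+(i+1)=2r$, then $(i+1)+1\le 2r-1$;
\item across $L_{6r-6+j(6r-5)},L_{6r-5+j(6r-5)}$, the sum is $1+3$;
\item the two middle pairs give $3+g$ and $g+3$, both at most $2r$ by Lemma \ref{lemGBound};
\item at a block boundary the sum is $3+1$;
\item at the end, $1+1$ and $1+(2r-1)$.
\end{itemize}
All are at most $2r$ because $r\ge 4$.

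The only nontrivial input is the bound $g\le 2r-3$ from Lemma \ref{lemGBound}, which is already available. Once the bound is in hand, the greedy left-to-right choice of color sets finishes the proof. I expect the main (minor) obstacle to be getting the index bookkeeping at the block junctions right, in particular confirming that $L_{6r-3+j(6r-5)}$ is followed by a weight-$1$ singleton and that the last block is followed by $L_{p(6r-5)+3}$.
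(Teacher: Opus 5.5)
Your proposal is correct and follows essentially the same route as the paper. Both arguments check that $|L_\ell\cup L_{\ell+1}|\le 2r$ for every pair of consecutive layers, using $g\le 2r-3$ from Lemma \ref{lemGBound} for the two pairs around the type \textbf{(F)} layer, and then color greedily from left to right; your version simply spells out the greedy choice of disjoint color sets and the index bookkeeping at the block junctions in more detail.
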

\begin{proof}
Because edges occur only within a layer or between consecutive layers, it is enough to prove $|L_{\ell}\cup L_{\ell+1}|\le 2r$ for every $\ell\in\{1,\ldots,p(6r-5)+4\}$; one may then color the layers greedily from left to right. 

For $\ell\in\{1,2,p(6r-5)+3,p(6r-5)+4\}$, the desired inequality for $L_{\ell}\cup L_{\ell+1}$ follows immediately from parts \textbf{(A)} and \textbf{(B)}. By periodicity, it remains  to consider the case $3\le\ell\le 6r-3$. If $\ell=3i$ with $1\le i\le 2r-3$, then parts \textbf{(B)} and \textbf{(C)} give $|L_{\ell}\cup L_{\ell+1}|=1+(2r-1-i)\le 2r-1$. If $\ell=3i+1$ with $1\le i\le 2r-3$, then parts \textbf{(C)} and \textbf{(D)} give $|L_{\ell}\cup L_{\ell+1}|=(2r-1-i)+(i+1)= 2r$. If $\ell=3i+2$ with $1\le i\le 2r-3$, then parts \textbf{(B)} and \textbf{(D)} give $|L_{\ell}\cup L_{\ell+1}|=(i+1)+1\le 2r-1$. For $\ell\in\{6r-6,6r-3\}$, the relevant union has $4$ $ (\le 2r)$ vertices. Finally, for $\ell\in\{6r-5,6r-4\}$, Lemma \ref{lemGBound} gives $|L_{\ell}\cup L_{\ell+1}|=3+g\le 2r$. 
\end{proof}

 For $j\in\{0,1,\ldots,p-1\}$, call the layers $L_{3+j(6r-5)},L_{4+j(6r-5)},\ldots,L_{6r-3+j(6r-5)}$ the $j$-th \emph{period}. The layer immediately following a period is called its \emph{right-hand junction}. The following lemma establishes the required lower bound on $W(u)$. 

\begin{lemma}\label{lemNeighborSumWeights}
For every vertex $u\in V(J_{p,r})$, we have $W(u)\ge\delta$. Moreover, $W(u)=\delta$ for $u\in L_6$.
\end{lemma}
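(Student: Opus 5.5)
The plan is a direct layer-by-layer computation. In a layered clique graph, the neighbours of a vertex $u\in L_\ell$ are all other vertices of $L_\ell$ together with all vertices of $L_{\ell-1}$ and $L_{\ell+1}$. Writing $w(u)$ for the weight of $u$, this gives
\[
W(u)=A_{\ell-1}+A_\ell+A_{\ell+1}-w(u),
\]
with $A_0=A_{p(6r-5)+6}=0$. Because the construction repeats with period $6r-5$, it suffices to treat the boundary layers from \textbf{(A)}, the final layer $L_{p(6r-5)+3}$, and the layers $L_3,\dots,L_{6r-3}$ of one period. The layer following the $j$-th period is the first layer of the next period, so it needs no separate treatment. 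The layer preceding the first layer of a period is either $L_2$, of weight $\delta$, or a layer of type \textbf{(E)} with $i=3$, of total weight $a_3=\lfloor x\rfloor=c_0$.

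\textbf{Boundary layers.} For these the bound is immediate: each vertex of $L_1$, $L_2$, $L_{p(6r-5)+4}$ and $L_{p(6r-5)+5}$ has a neighbour of weight $\delta$. The vertex of $L_{p(6r-5)+3}$ is adjacent to the vertex of $L_{p(6r-5)+4}$, which has weight $\delta$. The vertex of $L_3$ is adjacent to $L_2$, of weight $\delta$.

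\textbf{Weight-one layers $L_{3i}$.} For $2\le i\le 2r-3$, or $i=1$ preceded by a type \textbf{(E)} layer, we get
\[
W(u)=c_{i-1}+b_i=\lfloor (i-1+1)x\rfloor+\delta-\lfloor ix\rfloor=\delta.
\]
This is exactly the equality case, and taking $i=2$ gives the claim $W(u)=\delta$ for $u\in L_6$. For $i=2r-2$ we get $W(u)=c_{2r-3}+a_1=\lfloor(\tau-1)x\rfloor+\lceil x\rceil$. This exceeds $(\tau-1)x+x-1=\delta-1$ and is an integer, so it is at least $\delta$.

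\textbf{Layers $L_{3i+1}$ and $L_{3i+2}$.} The identity $b_i+c_i=\delta-\lfloor ix\rfloor+\lfloor(i+1)x\rfloor=\delta+t_i-1$ gives
\[
W(u)=1+b_i+c_i-w(u)=\delta+t_i-w(u)
\]
for $u$ in either layer. So we need $w(u)\le t_i$. Since $w(u)\le\lceil b_i/(\tau-i)\rceil$, respectively $w(u)\le\lceil c_i/(i+1)\rceil$, this follows from Lemma~\ref{lemWeightBound}(i).

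\textbf{The three special layers.} These are $L_{6r-5}$, $L_{6r-4}$ and $L_{6r-3}$ (and their translates), with totals $a_1$, $\eta$ and $a_3$.
\begin{itemize}
\item For $u\in L_{6r-5}$, using $\eta=\delta-\lambda-\tau$, we have $W(u)=1+a_1+\eta-w(u)=\delta+(a_1-\lambda-\tau+1)-w(u)$. It suffices that $w(u)\le\lceil a_1/3\rceil\le y\le a_1-\lambda-\tau+1$. This holds by $a_1\le 3y$ from Lemma~\ref{lemWeightBound}(ii) and $a_3\le a_1$.
\item For $u\in L_{6r-3}$, the next layer has weight $1$. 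Then $W(u)=\eta+a_3+1-w(u)=\delta+y-w(u)$, and $\lceil a_3/3\rceil\le y$ follows from $a_3<3y$.
\item For $u\in L_{6r-4}$, we have $W(u)=a_1+\eta+a_3-w(u)=\delta+z-w(u)$. Here $w(u)\le\lceil\eta/g\rceil\le z$ follows from $\eta\le gz$.
\end{itemize}

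The only delicate points are the floor/ceiling bookkeeping at the period boundaries: identifying $c_0=a_3$ for the junction layer, and the integrality argument at $L_{6r-6}$. All the genuine inequalities have already been packaged into Lemma~\ref{lemWeightBound}, so the main risk is an off-by-one error rather than any conceptual difficulty.
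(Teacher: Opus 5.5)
Your proposal is correct and follows essentially the same route as the paper. Like the paper, you compute $W(u)=A_{\ell-1}+A_\ell+A_{\ell+1}-w(u)$ layer by layer over one period plus the end layers and the final junction, using $c_{i-1}+b_i=\delta$ for the equality at $L_6$, $1+b_i+c_i-\delta=t_i$ with Lemma~\ref{lemWeightBound}(i), and the quantities $y$, $z$ with Lemma~\ref{lemWeightBound}(ii) for the three special layers. The only differences are cosmetic: you dispatch $L_1,L_2,L_3$ and the last three layers by exhibiting a neighbour of weight $\delta$ rather than computing $T_\ell$, and at $L_{6r-6}$ you use an integrality argument where the paper uses the identity $c_{2r-3}=\delta-a_1$.
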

\begin{proof}
Suppose that $u\in L_{\ell}$ has weight $q$. For every layer other than the two end layers, $W(u)=A_{\ell-1}+A_{\ell}+A_{\ell+1}-q$. Let $T_{\ell}=A_{\ell-1}+A_{\ell}+A_{\ell+1}-\delta$. Thus $W(u)=\delta+T_{\ell}-q$, and it suffices to prove $T_{\ell}\ge q$. Periodicity reduces the internal calculation to one period and its right-hand junction.

 For $\ell=3$, parts \textbf{(A)}--\textbf{(C)} give $T_{\ell}=1+b_1\ge 1\ge q$.  If $\ell=3i$ and $2\le i\le 2r-3$, then $b_i+c_{i-1}=\delta$, and hence $T_{\ell}=c_{i-1}+1+b_i-\delta=1=q$. In particular, $W(u)=\delta$ if $u\in L_6$.  The first layer of any later period has index congruent to $3$ modulo $6r-5$, and the identity $b_1=\delta-a_3$ gives $T_{\ell}=a_3+1+b_1-\delta=1=q$.

If $\ell\in\{3i+1,3i+2\}$ and $1\le i\le 2r-3$, parts \textbf{(B)}--\textbf{(D)} give  $T_{\ell}=1+b_i+c_i-\delta=t_i$. Lemma \ref{lemWeightBound} (i) shows that $t_i$ is at least both $\lceil \frac{b_i}{2r-1-i}\rceil$ and $\lceil \frac{c_i}{i+1}\rceil$. So $T_{\ell}\ge q$. 

It remains to check the four special layers at the end of a period and the final right-hand junction $L_{p(6r-5)+3}$. Since $c_{2r-3}=\lfloor (\tau-1)x\rfloor=\delta-a_1$, we obtain $T_{6r-6}=c_{2r-3}+1+a_1-\delta=1=q$. For $\ell=6r-5$,  Lemma \ref{lemWeightBound} (ii) gives $T_{\ell}=1+a_1+\eta-\delta\ge y\ge \lceil \frac{a_1}{3}\rceil\ge q$. For $\ell=6r-4$, the same lemma gives $T_{\ell}=a_1+\eta+a_3-\delta=z\ge \lceil \frac{\eta}{g}\rceil\ge q$. Finally, for $\ell=6r-3$, it gives $T_{\ell}=\eta+a_3+1-\delta=y\ge \lceil \frac{a_3}{3}\rceil\ge q$. These computations apply to every period after shifting the indices by a multiple of $6r-5$. At the final right-hand junction $\ell=p(6r-5)+3$, we have  $T_{\ell}=a_3+1\ge 1=q$. 

For completeness, we consider the case $\ell\in\{1,2,p(6r-5)+4,p(6r-5)+5\}$. By symmetry, it suffices to consider $\ell\in\{1,2\}$. If $\ell=1$, then $q=\delta$ and $W(u)=A_1+A_2-\delta=(2r-1)\delta\ge\delta$.  If $\ell=2$, then $T_{\ell}=(2r-1)\delta+1\ge\delta= q$. Hence, we complete the proof.
\end{proof}

We conclude the preliminary analysis by determining the total weight of $J_{p,r}$.
\begin{lemma}\label{lemSumWeights}
The total weight of $J_{p,r}$ is $p(\tau\delta-\lambda-1)+4r\delta+1$. 
\end{lemma}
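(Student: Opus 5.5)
The total weight is the sum of the layer weights $A_\ell$, and the parts \textbf{(C)}--\textbf{(F)} were set up so that each layer's total is exactly $b_i$, $c_i$, $a_i$ or $\eta$. So the plan is to split the layers into the end pieces, the $p$ periods, and the junctions, compute the weight of one period, and multiply by $p$.

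First, the end layers. By part \textbf{(A)}, $L_1$ and $L_{p(6r-5)+5}$ each weigh $(2r-1)\delta$, and $L_2$ and $L_{p(6r-5)+4}$ each weigh $\delta$. By part \textbf{(B)}, the final junction $L_{p(6r-5)+3}$ weighs $1$. Together these give $2(2r-1)\delta+2\delta+1=4r\delta+1$.

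Second, the periods. Each period $j$ is the block $L_{3+j(6r-5)},\dots,L_{6r-3+j(6r-5)}$, so the periods tile exactly the layers $L_3,\dots,L_{p(6r-5)+2}$. Hence it suffices to show that one period has weight $\tau\delta-\lambda-1$. Within period $0$ the layers are grouped as follows.
\begin{itemize}[nosep]
\item The layers $L_{3i}$ for $1\le i\le 2r-2$ give $2r-2$ ones.
\item The layers $L_{3i+1}$ and $L_{3i+2}$ for $1\le i\le 2r-3$ give $\sum_{i=1}^{2r-3}(b_i+c_i)$.
\item $L_{6r-5}$, $L_{6r-4}$ and $L_{6r-3}$ give $a_1+\eta+a_3$.
\end{itemize}

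Third, the sum itself. Using $b_i=\delta-c_{i-1}$ with $c_0=\lfloor x\rfloor=a_3$, the sum telescopes:
\[\sum_{i=1}^{2r-3}(b_i+c_i)=(2r-3)\delta-c_0+c_{2r-3}.\]
From the proof of Lemma \ref{lemNeighborSumWeights} we have $c_{2r-3}=\delta-a_1$, and $c_0=a_3$. So this sum equals $(2r-2)\delta-a_1-a_3$. Adding the other two groups and substituting $\eta=\delta-\lambda-\tau$ gives
\[(2r-2)+(2r-2)\delta-a_1-a_3+a_1+a_3+\delta-\lambda-\tau=(2r-1)\delta-\lambda+(2r-2-\tau).\]
Since $\tau=2r-1$, this is $\tau\delta-\lambda-1$, as required.

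The main risk is the index bookkeeping. I expect it to check out, but two things need confirming. One is that the period layers $3,\dots,6r-3$ are each assigned exactly once by \textbf{(B)}--\textbf{(F)}: $L_{3(2r-2)}=L_{6r-6}$ is a weight-$1$ layer, $L_{6r-5}$ and $L_{6r-3}$ come from \textbf{(E)}, and $L_{6r-4}$ from \textbf{(F)}. The other is that $L_{p(6r-5)+3}$ is not already counted inside the last period.

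Combining the three steps, the total weight of $J_{p,r}$ is
\[p(\tau\delta-\lambda-1)+4r\delta+1,\]
which is the claimed formula.
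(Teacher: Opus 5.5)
Your proposal is correct and follows essentially the same route as the paper: you isolate the five non-period layers (total $4r\delta+1$), then compute one period's weight via the telescoping identity $b_i=\delta-c_{i-1}$ together with $c_0=a_3$ and $c_{2r-3}=\delta-a_1$, obtaining $\tau\delta-\lambda-1$. Your explicit check that the periods tile $L_3,\dots,L_{p(6r-5)+2}$ exactly, leaving $L_{p(6r-5)+3}$ outside them, is a detail the paper leaves implicit.
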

\begin{proof}
 Let $S$ be the total weight of one period. Since $b_i+c_{i-1}=\delta$ for $1\le i\le 2r-3$, $c_0=a_3$, and $c_{2r-3}=\delta-a_1$, the definition of $J_{p,r}$ gives
\begin{align*}
S&=\sum_{i=1}^{2r-3} (1+b_i+c_i)+(1+a_1+\eta+a_3)\\
&=\sum_{i=1}^{2r-3} (1+\delta-c_{i-1}+c_i)+(1+\delta-c_{2r-3}+\eta+c_0)\\
&=(2r-3)(1+\delta)+(1+\delta+\eta)=\tau\delta-\lambda-1.
\end{align*}
The five layers outside the $p$ periods have total weight $4r\delta+1$. Adding their contribution to $pS$ proves the stated formula.
\end{proof}

We now assemble the preceding properties into the even-case counterexamples.

\begin{theorem}\label{ThmExample1.1}
 Let  $p\ge 1$, $r\ge 4$, $\delta\ge 6(6r-5)(2r-1)(3r-1)$ be integers. Set $d=3r-1$, $\lambda=\left\lceil\frac{\delta}{d}\right\rceil$, and $\tau=2r-1$. Then there exists a connected $2r$-colorable (and hence $K_{2r+1}$-free) graph $G_{p,r}$ with minimum degree $\delta$, order $n=p(\tau\delta-\lambda-1)+4r\delta+1$, and diameter $p(6r-5)+4$. Moreover, for every fixed pair $(r,\delta)$, $\textup{diam}(G_{p,r})-\frac{3r-1}{r}\cdot\frac{n}{\delta}\to+\infty$ as $p\to+\infty$. 
\end{theorem}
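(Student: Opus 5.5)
We take $G_{p,r}$ to be the blow-up of the weighted layered clique graph $J_{p,r}$. The required properties then follow from the lemmas already proved, together with a distance computation and an asymptotic comparison.

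\textbf{Degree, order, colorability, connectivity.} A vertex in the blow-up class of $u\in V(J_{p,r})$ has degree exactly $W(u)$, since each class is independent and adjacent classes are completely joined. By Lemma~\ref{lemNeighborSumWeights}, every vertex has degree at least $\delta$, and vertices coming from $L_6$ have degree exactly $\delta$, so $\delta(G_{p,r})=\delta$. Lemma~\ref{lemPositive} guarantees that all weights are positive integers, so the blow-up is well defined and nonempty in every layer. Lemma~\ref{lemSumWeights} gives $n=p(\tau\delta-\lambda-1)+4r\delta+1$. A proper $2r$-coloring of $J_{p,r}$ (Lemma~\ref{lemChromatic}) lifts to $G_{p,r}$ by giving each blow-up class the color of its vertex. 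This works because each class is independent and adjacent classes receive distinct colors. Hence $G_{p,r}$ is $2r$-colorable and therefore $K_{2r+1}$-free. Connectivity holds because consecutive layers are completely joined.

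\textbf{Diameter.} In a layered clique graph with $t$ layers, a vertex in $L_i$ and a vertex in $L_j$ are at distance $|i-j|$ when $i\ne j$, because each step changes the layer index by at most one and consecutive layers are completely joined. Within one layer the distance is $1$. In the blow-up, two vertices of the same class with weight at least $2$ are at distance $2$, since they are nonadjacent but have a common neighbour in an adjacent layer. Every such distance is at most $t-1$ because $t\ge 3$. With $t=p(6r-5)+5$, the diameter is therefore exactly $t-1=p(6r-5)+4$, attained between $L_1$ and $L_t$.

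\textbf{Asymptotics.} We have
\[
\mathrm{diam}-\frac{3r-1}{r}\cdot\frac{n}{\delta}=p\Bigl[(6r-5)-\frac{(3r-1)(\tau\delta-\lambda-1)}{r\delta}\Bigr]+O(1),
\]
so it suffices to show that the bracket is a positive constant. Using $\lambda\ge \delta/d$ with $d=3r-1$,
\[
\frac{(3r-1)(\tau\delta-\lambda-1)}{r\delta}<\frac{(3r-1)(2r-1)}{r}-\frac{1}{r}=\frac{6r^2-5r}{r}=6r-5 .
\]
Here the first step uses $(3r-1)\lambda\ge\delta$ together with the extra $-1$, which gives strict inequality. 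The bracket is therefore at least $\frac{3r-1}{r\delta}>0$, a constant independent of $p$. Consequently the difference tends to $+\infty$ as $p\to\infty$.

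The main obstacle is this last inequality. The gain is only of order $1/\delta$ per period, so the estimate must be done exactly rather than asymptotically. In particular, the ceiling in $\lambda$ must be handled carefully; the chain above shows this works. The remaining step that needs care is the distance claim, where one must check that no shortcut exists. This is immediate from the absence of edges between non-consecutive layers.
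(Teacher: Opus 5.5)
Your proposal is correct and follows essentially the same route as the paper. You blow up $J_{p,r}$, read off degree, order and colorability from the lemmas, compute distances layer by layer, and reduce the asymptotics to the per-period gain $\frac{(3r-1)(\lambda+1)-\delta}{r\delta}\ge\frac{3r-1}{r\delta}>0$, which is the same estimate the paper uses.
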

\begin{proof}
Let $G_{p,r}$ be the blow-up of $J_{p,r}$. The graph $G_{p,r}$ is connected because every layer is nonempty and every two consecutive layers are completely joined. By the definition of blow-up and $J_{p,r}$, the chromatic number of $G_{p,r}$ and $J_{p,r}$ is the same. Lemma \ref{lemChromatic} therefore shows $G_{p,r}$ is $2r$-colorable and hence $K_{2r+1}$-free. 

The degree of a vertex in a blown-up class is exactly  the sum of the weights of the neighbors of the corresponding vertex of  $J_{p,r}$. Lemma \ref{lemNeighborSumWeights} therefore shows that the minimum degree of $G_{p,r}$ is at least $\delta$, and the equality on the blow-up of $L_6$ shows that the minimum degree is exactly $\delta$. Lemma \ref{lemSumWeights} gives the asserted order of  $G_{p,r}$.

 For every $\ell\in\{1,\ldots,p(6r-5)+5\}$, let $L_{\ell}'$ be the union of the blown-up classes corresponding to vertices in $L_{\ell}$. Every edge of $G_{p,r}$ has both ends in the same layer or in consecutive layers, and the consecutive layers are completely joined. Let $i,j$ be integers satisfying $1\le i<j\le p(6r-5)+5$. So every vertex of $L_i'$ is at distance exactly $j-i$ from every vertex of $L_j'$. Two vertices in the same blown-up class have distance at most $2$, while vertices in distinct classes of the same layer are adjacent. Since $p(6r-5)+4\ge 2$, it follows that $\textup{diam}(G_{p,r})=p(6r-5)+4$. 

It remains to compare this diameter with the conjectured bound. Since $\lambda=\left\lceil\frac{\delta}{d}\right\rceil$, we have $\delta\le \lambda d$ and therefore $(\lambda+1)d-\delta\ge d>0$. Using $d=3r-1$ and $\tau=2r-1$, we obtain
\begin{align*}
\textup{diam}(G_{p,r})-\frac{3r-1}{r}\cdot\frac{n}{\delta}&=p\frac{(\lambda+1)d-\delta}{r\delta}+4-\frac{d(4r\delta+1)}{r\delta}\\
&\ge p\frac{d}{r\delta}+4-\frac{d(4r\delta+1)}{r\delta}.
\end{align*}
The right-hand side tends to $+\infty$ with $p$, as required.  
\end{proof}

\begin{remark}
We do not attempt to optimize the lower bound of $\delta$. In fact, a similar construction yields counterexamples to Conjecture \ref{Conj2021Czaba} for every even $k=2r\ge 8$ and every $\delta\ge (6r+72)(3r-1)$ and to Conjecture \ref{Conj1989Erdos} (ii) for every $r\ge 4$ and every $\delta$ in this range that is divisible by $3r-1$. We do not present the details here because it is more involved.

\end{remark}
\section{Counterexamples for odd $k$}\label{czabCount}
We now turn to the remaining parity. Let  $\mu,r,k$, and $\delta$ be positive integers such that $\mu\ge 2$, $r\ge 4$, $k=2r-1$ and $\delta\ge 5k^4$. Define \[a=\frac{1}{k-1}\quad\textup{and}\quad b=1-\frac{2a}{3}=\frac{3k-5}{3(k-1)}.\] 

We construct a weighted layered clique graph $T_{k,\mu}$ with $\mu(3k-5)-1$ layers $L_1,L_2,\ldots,$ $L_{\mu(3k-5)-1}$ as follows. Clearly, all the displayed weights are positive integers.\\

\noindent\textbf{(A)} For every $j\in\{0,1,\ldots,\mu-1\}$, each of $L_{1+j(3k-5)}$ and $L_{3+j(3k-5)}$ has three vertices, each of weight $\left\lceil \frac{\delta a}{3}\right\rceil$. The layer $L_{2+j(3k-5)}$  contains $k-3$ vertices, each of weight $\left\lceil \frac{\delta b}{k-3}\right\rceil$.

\noindent\textbf{(B)} For every $i\in\{1,\ldots,k-3\}$ and $j\in\{0,1,\ldots,\mu-1\}$,  the layer $L_{3i+1+j(3k-5)}$ consists of a vertex of weight $1$.

\noindent\textbf{(C)} For every $j\in\{1,\ldots,\mu-1\}$,  the layer $L_{j(3k-5)}$  consists of a vertex of weight $1$.

\noindent\textbf{(D)} For every $i\in\{1,\ldots,k-3\}$ and $j\in\{0,1,\ldots,\mu-1\}$,  the layer $L_{3i+2+j(3k-5)}$ contains $k-i-1$ vertices, each of weight $\lceil \delta a\rceil$, while the layer $L_{3i+3+j(3k-5})$  contains $i+1$ vertices, each of  weight $\lceil \delta a\rceil$. 
\vskip 2mm
For $j\in\{0,1,\ldots,\mu-1\}$, call the layers $L_{1+j(3k-5)},L_{2+j(3k-5)},\ldots,L_{3k-6+j(3k-5)}$ the $j$-th \emph{basic period}. The layers $L_{j(3k-5)}$ for $1\le j\le\mu-1$ are precisely the junctions between consecutive basic periods. We first establish the structural and numerical properties needed for the odd-case construction.


\begin{lemma}\label{lemTkChromatic}
The graph $T_{k,\mu}$ is $k$-colorable.
\end{lemma}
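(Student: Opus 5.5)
We prove Lemma \ref{lemTkChromatic} the same way as Lemma \ref{lemChromatic}. In a layered clique graph, edges lie only inside a layer or between consecutive layers, so it suffices to show $|L_{\ell}\cup L_{\ell+1}|\le k$ for every $\ell\in\{1,\ldots,\mu(3k-5)-2\}$. Given this, we color greedily from left to right. The layer $L_1$ is itself a clique on at most $k$ vertices, so it receives distinct colors. When we reach $L_{\ell+1}$, its vertices must avoid only the colors used on $L_{\ell}$. Since $|L_{\ell}|+|L_{\ell+1}|\le k$, there are enough unused colors among $\{1,\ldots,k\}$ to give the vertices of $L_{\ell+1}$ distinct new ones. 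Vertices in layers further back are non-adjacent to $L_{\ell+1}$, so no conflict arises with them.

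By periodicity with period $3k-5$, we only need to check the layers of one basic period, $\ell=1+j(3k-5),\ldots,3k-6+j(3k-5)$, together with the junction pairs. We list the layer sizes from \textbf{(A)}--\textbf{(D)}, reducing indices modulo $3k-5$:

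1. Position $1$ has $3$ vertices, position $2$ has $k-3$, and position $3$ has $3$.

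2. For $1\le i\le k-3$, position $3i+1$ has $1$ vertex, position $3i+2$ has $k-i-1$, and position $3i+3$ has $i+1$.

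3. Position $0$, the junction layer $L_{j(3k-5)}$ for $j\ge1$, has $1$ vertex.

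The largest index in a period is $3(k-3)+3=3k-6$, which is consistent with this labelling.

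We then run through the consecutive pairs:

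1. Positions $(1,2)$ and $(2,3)$ each give $3+(k-3)=k$.

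2. Positions $(3,4)$ give $3+1=4\le k$, since $k\ge7$.

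3. For each $i$, the pair $(3i+1,3i+2)$ gives $1+(k-i-1)=k-i\le k$.

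4. The pair $(3i+2,3i+3)$ gives $(k-i-1)+(i+1)=k$.

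5. For $i\le k-4$, the pair $(3i+3,3i+4)$ gives $(i+1)+1\le k-2$.

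6. At the end of a period, position $3k-6$ has $k-2$ vertices. The next layer is either the junction, giving $(k-2)+1=k-1$, or nothing, if this is the final layer $\mu(3k-5)-1$.

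7. The junction followed by the next period's first layer gives $1+3=4$.

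All of these are at most $k$, which completes the argument.

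The only delicate point is indexing. We must confirm that the layer sets in \textbf{(A)}--\textbf{(D)} partition $\{1,\ldots,\mu(3k-5)-1\}$ exactly as in the table above. In particular, the junctions $L_{j(3k-5)}$ occur only for $1\le j\le\mu-1$, and the final layer $L_{\mu(3k-5)-1}$ is the $(i+1)$-vertex layer with $i=k-3$. Once the index bookkeeping is confirmed, all the inequalities are immediate.
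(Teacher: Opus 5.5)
Your proposal is correct and follows the paper's proof: reduce to checking $|L_{\ell}\cup L_{\ell+1}|\le k$ for consecutive layers, then verify this over one basic period plus its junction, and finally color greedily from left to right. Your pairs are the same as the paper's cases $\ell\in\{1,2\}$, $\ell\in\{3,3k-5\}$ and $\ell\in\{3i+1,3i+2,3i+3\}$, and you spell out the greedy step and the index bookkeeping (including the final layer $L_{\mu(3k-5)-1}$ of size $k-2$) in a bit more detail than the paper does.
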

\begin{proof}
It is enough to show $|L_{\ell}\cup L_{\ell+1}|\le k$ for every $\ell\in\{1,\ldots,\mu(3k-5)-2\}$. By periodicity, consider first $1\le\ell\le3k-5$. If $\ell\in\{1,2\}$, part \textbf{(A)} gives $|L_{\ell}\cup L_{\ell+1}|=k$. If $\ell\in\{3,3k-5\}$, the relevant pair has $4$ $(\le k)$ vertices.

 For $4\le\ell\le 3k-6$, there are three cases. If $\ell=3i+1$ and $1\le i\le k-3$, then $|L_{\ell}\cup L_{\ell+1}|=1+(k-i-1)\le k-1$. If $\ell=3i+2$ and $1\le i\le k-3$, then $|L_{\ell}\cup L_{\ell+1}|=(k-i-1)+(i+1)=k$. If $\ell=3i+3$ and $1\le i\le k-3$, then $|L_{\ell}\cup L_{\ell+1}|=(i+1)+1\le k-1$. These inequalities also cover the junctions between consecutive basic periods. Greedy coloring from left to right now proves the lemma.
\end{proof}
The next lemma establishes the lower bound of $W(v)$. 

\begin{lemma}\label{lemTkWeights}
For every vertex $v\in V(T_{k,\mu})$, we have $W(v)\ge\delta$. 
\end{lemma}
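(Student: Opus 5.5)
We argue exactly as in the proof of Lemma \ref{lemNeighborSumWeights}. For a vertex $v\in L_\ell$ of weight $q$ that does not lie in an end layer, $W(v)=A_{\ell-1}+A_\ell+A_{\ell+1}-q$. So it suffices to show that $A_{\ell-1}+A_\ell+A_{\ell+1}-q\ge\delta$. For the two end layers $L_1$ and $L_{\mu(3k-5)-1}$, only one neighbouring layer is present, and these are checked separately. By periodicity it is enough to treat one basic period together with a junction. The basic tool is that every displayed weight is a ceiling, so $A_\ell$ is at least the corresponding real quantity. Writing $w=\lceil\delta a\rceil\ge\delta a$, the layer totals are:
\begin{itemize}[label={}]
\item layers 1 and 3 of a period: $3\lceil\delta a/3\rceil\ge\delta a$;
\item layer 2: $(k-3)\lceil\delta b/(k-3)\rceil\ge\delta b$;
\item layers $3i+2$: $(k-i-1)w$;
\item layers $3i+3$: $(i+1)w$;
\item weight-$1$ layers: total $1$.
\end{itemize}
The identity $2a/3+b=1$ and the identity $(k-1)a=1$ will do all the work.

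We now go through the cases, with layer indices taken inside a period.

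\textbf{Case 1: $\ell=2$.} The layer sum is at least $\delta a+\delta b+\delta a$. Subtracting $q=\lceil\delta b/(k-3)\rceil$ leaves at least $\delta+\delta a/3-\delta b/(k-3)-1$. This is $\ge\delta$ provided $\delta a/3\ge\delta b/(k-3)+1$. Since $a=\frac1{k-1}$ and $b=\frac{3k-5}{3(k-1)}$, this reduces to $(k-3)\ge 3k-5$, which is false. So this naive estimate fails, and I need to recheck the adjacency. Layer 2 is adjacent to layers 1 and 3. The vertices in layer 2 also see each other, contributing $A_2-q$. The total is therefore $2\cdot3\lceil\delta a/3\rceil+(k-4)\lceil\delta b/(k-3)\rceil$. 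This is roughly $2\delta a+\frac{k-4}{k-3}\delta b$, and comparing it with $\delta$ requires care.

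\textbf{Case 2: $\ell\in\{1,3\}$.} Layer 1 (or 3) sees the other two vertices of its own layer, all of layer 2, and the adjacent weight-$1$ vertex (layer 4, or the junction). This gives at least $\frac23\delta a+\delta b+1>\delta$.

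\textbf{Case 3: $\ell=3i+1$, a weight-$1$ layer.} The neighbours are layers $3i$ and $3i+2$. For $i=1$ these are layer 3 and layer 5, with total at least $\delta a+(k-2)\delta a=\delta$. For $i\ge2$ they are layers $3(i-1)+3$ and $3i+2$, with total $(i+(k-i-1))w=(k-1)w\ge\delta$.

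\textbf{Case 4: $\ell=3i+2$ or $3i+3$.} These two layers together contain $k$ vertices of weight $w$, and they are adjacent to a weight-$1$ vertex. So $W\ge(k-1)w+1\ge\delta+1$.

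\textbf{Case 5: junctions and end layers.} The last layer $3k-6$ of a period is adjacent to the junction $L_{3k-5}$, which has weight $1$. The junction itself sees layer $3k-6$, with total $(k-2)w$, and the next period's layer 1, with total $\ge\delta a$; together these give at least $\delta$. Finally, the end layers: $L_1$ sees only $\frac23\delta a+\delta b+1\ge\delta$. For the last layer $L_{\mu(3k-5)-1}=L_{3k-6}$ of the final period, the vertices have weight $w$ and see $k-2$ others within the layer and layer $3k-7$. Checking against the $(i+1)$ and $(k-i-1)$ counts gives $W\ge(k-1)w\ge\delta$.

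The main obstacle is layer 2, where $q$ is large. I expect the ceilings $\lceil\delta a/3\rceil$ to give only $O(1)$ slack, while the layer-2 requirement, $2\delta a+\frac{k-4}{k-3}\delta b\ge\delta$, becomes, after substituting $a$ and $b$, an inequality in $k$. I would first verify this exactly. One computes $2a+\frac{k-4}{k-3}b-1=\frac{6(k-3)+(k-4)(3k-5)-3(k-1)(k-3)}{3(k-1)(k-3)}=\frac{2k-7}{3(k-1)(k-3)}>0$ for $k\ge7$. This gives a positive margin of order $\delta/k^2$, which dominates the rounding error of at most $k$ because $\delta\ge5k^4$.
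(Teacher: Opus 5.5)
Your case split matches the paper's: end layers, layer $2$, layers $1$ and $3$, the weight-$1$ layers, the $\lceil\delta a\rceil$-layers, and the junction. Most cases are fine. The problem is the decisive case $\ell=2$, the only step of this lemma that needs $k\ge 7$: it rests on a wrong computation.

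The numerator is $6(k-3)+(k-4)(3k-5)-3(k-1)(k-3)=k-7$, not $2k-7$. Equivalently, $2a+\frac{k-4}{k-3}b-1=\frac{4a}{3}-\frac{b}{k-3}=\frac{k-7}{3(k-1)(k-3)}$. So for $k=7$ (allowed, since $r=4$) the margin is exactly $0$. Your closing justification therefore fails there: you argue that a positive margin of order $\delta/k^2$ swallows a rounding error of up to $k$ because $\delta\ge 5k^4$.

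The repair is to notice that there is no rounding error at all. Every weight is a ceiling entering $W(v)$ with a positive coefficient, so
\[
W(v)=6\left\lceil\tfrac{\delta a}{3}\right\rceil+(k-4)\left\lceil\tfrac{\delta b}{k-3}\right\rceil\ge 2\delta a+\tfrac{k-4}{k-3}\,\delta b=\delta+\delta\Bigl(\tfrac{4a}{3}-\tfrac{b}{k-3}\Bigr)\ge\delta .
\]
The last step is $4(k-3)\ge 3k-5$, i.e.\ $k\ge 7$. This is exactly the paper's argument. It also shows that $\delta\ge 5k^4$ plays no role in this lemma.

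Your first ``naive'' attempt was also mis-simplified: $2\delta a+\delta b=\delta+\frac43\delta a$, not $\delta+\frac13\delta a$. That is why it seemed to fail for every $k$. Correctly simplified, it gives $W(v)>\delta+\frac{(k-7)\delta}{3(k-1)(k-3)}-1$, so that route loses only an additive $1$.

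There are two smaller slips. First, a vertex of $L_1$ has no weight-$1$ neighbour, so its bound is $\frac23\delta a+\delta b=\delta$ without the $+1$; this is still sufficient. Second, a vertex of the last layer sees $k-3$ other vertices of its own layer plus the $2$ vertices of layer $3k-7$. That gives your (correct) total of $k-1$ vertices of weight $\lceil\delta a\rceil$. With these corrections your proof coincides with the paper's.
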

\begin{proof}
Suppose that $v\in L_{\ell}$ has weight $q$. The two end layers can be checked directly. If $\ell=1$, then $$W(v)=2\left\lceil \frac{\delta a}{3}\right\rceil+(k-3)\left\lceil \frac{\delta b}{k-3}\right\rceil\ge\delta\left(\frac{2a}{3}+b\right)=\delta.$$ 
If $\ell=\mu(3k-5)-1$, we have $W(v)=(k-1)\lceil \delta a\rceil\ge\delta(k-1)a=\delta$.

For every non-end layer, $W(v)=A_{\ell-1}+A_{\ell}+A_{\ell+1}-q$. By periodicity, it suffices to check the layer types in the first  basic period and at its right-hand junction. For $\ell=2$, part \textbf{(A)} yields 
\[W(v)=6\left\lceil \frac{\delta a}{3}\right\rceil+(k-4)\left\lceil \frac{\delta b}{k-3}\right\rceil \ge\delta\left(1+\frac{4a}{3}-\frac{b}{k-3}\right)\ge\delta.\]
The last inequality is equivalent to $\frac{4a}{3}\ge\frac{b}{k-3}$, which follows directly from $a=\frac{1}{k-1}$, $b=1-\frac{2a}{3}$, and $k\ge 7$. For $\ell=3$, we have \[W(v)=(k-3)\left\lceil \frac{\delta b}{k-3}\right\rceil+2\left\lceil \frac{\delta a}{3}\right\rceil+1\ge\delta\left(b+\frac{2a}{3}\right)+1=\delta+1.\] 
For the first layer of any later basic period, $\ell=1+j(3k-5)$ with $1\le j\le\mu-1$, we have 
$$W(v)=1+2\left\lceil \frac{\delta a}{3}\right\rceil+(k-3)\left\lceil \frac{\delta b}{k-3}\right\rceil\ge\delta+1.$$ 
For $\ell\in\{4,3k-5\}$, where the latter case corresponds to a junction, parts \textbf{(A)}-\textbf{(D)} give $W(v)=3\left\lceil \frac{\delta a}{3}\right\rceil+(k-2)\lceil \delta a\rceil\ge\delta(k-1)a=\delta$.

Finally, consider the case $5\le\ell\le 3k-6$. If $\ell=3i+1$ with $2\le i\le k-3$, then $W(v)=(k-1)\lceil \delta a\rceil\ge\delta(k-1)a\ge\delta$. If $\ell\in\{3i+2,3i+3\}$ with $1\le i\le k-3$, provided that $L_{\ell}$ is not the final layer handled above, then $W(v)=(k-1)\lceil \delta a\rceil+1>\delta$. This exhausts all layer types.
\end{proof}

Define 
$$w=6\left\lceil \frac{\delta}{3(k-1)}\right\rceil+(k-3)\left\lceil \frac{(3k-5)\delta}{3(k-1)(k-3)}\right\rceil+k(k-3)\left\lceil \frac{\delta}{k-1}\right\rceil+k-2.$$ 
We conclude the preliminary analysis by determining the total weight of $T_{k,\mu}$.

\begin{lemma}\label{lemSum1.2Weights}
The total weight of $T_{k,\mu}$ is $\mu w-1$. 
\end{lemma}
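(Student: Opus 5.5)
We compute the total weight by summing layer weights over each basic period together with its right-hand junction (or, for the last period, without one), and then account for the missing final junction.

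\textbf{Weight of one basic period.} Fix $j$. The basic period consists of the layers $L_{1+j(3k-5)},\ldots,L_{3k-6+j(3k-5)}$. By part \textbf{(A)}, the layers $L_{1+j(3k-5)}$ and $L_{3+j(3k-5)}$ together contribute $6\left\lceil \frac{\delta a}{3}\right\rceil=6\left\lceil \frac{\delta}{3(k-1)}\right\rceil$. The layer $L_{2+j(3k-5)}$ contributes $(k-3)\left\lceil \frac{\delta b}{k-3}\right\rceil=(k-3)\left\lceil \frac{(3k-5)\delta}{3(k-1)(k-3)}\right\rceil$, using $b=\frac{3k-5}{3(k-1)}$.

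For each $i\in\{1,\ldots,k-3\}$, the three layers $L_{3i+1+j(3k-5)}$, $L_{3i+2+j(3k-5)}$, $L_{3i+3+j(3k-5)}$ come from parts \textbf{(B)} and \textbf{(D)}. They contribute $1+(k-i-1)\lceil\delta a\rceil+(i+1)\lceil\delta a\rceil=1+k\lceil\delta a\rceil$. Summing over $i$ gives $(k-3)+k(k-3)\left\lceil\frac{\delta}{k-1}\right\rceil$.

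We must check that these index ranges tile the period exactly. The indices $3i+1,3i+2,3i+3$ for $1\le i\le k-3$ cover $4,\ldots,3k-6$, and together with $1,2,3$ this is exactly $1,\ldots,3k-6$. So one basic period has weight
\[w-(k-2)+(k-3)=w-1.\]

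\textbf{Junctions.} The layer $L_{j(3k-5)}$ for $1\le j\le\mu-1$ is not covered by any basic period. By part \textbf{(C)} it has weight $1$, and there are $\mu-1$ such junctions.

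We also confirm that the layers end at index $\mu(3k-5)-1$. The last basic period ends at $3k-6+(\mu-1)(3k-5)=\mu(3k-5)-1$. Hence every layer is either in exactly one basic period or is one of the $\mu-1$ junctions.

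\textbf{Total.} The total weight is
\[\mu(w-1)+(\mu-1)=\mu w-1.\]

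The only real care needed is the bookkeeping in the tiling step. One must verify that $1\le i\le k-3$ in part \textbf{(D)} produces indices up to $3(k-3)+3=3k-6$. One must also verify that part \textbf{(C)} contributes only for $j\ge1$, so that there is no junction layer at index $\mu(3k-5)$.
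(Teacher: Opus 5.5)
Your proof is correct and is essentially the paper's computation: the paper sums the weights part by part over all periods, obtaining $S_1+S_2+S_3+S_4$ with $S_2=(k-3)\mu$ and $S_3=\mu-1$, while you group the same terms by basic period to get $\mu(w-1)+(\mu-1)=\mu w-1$. Your explicit check that the basic periods and the $\mu-1$ junctions tile $L_1,\ldots,L_{\mu(3k-5)-1}$ without overlap is a detail the paper leaves implicit.
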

\begin{proof}
Partition the vertices according to parts \textbf{(A)}-\textbf{(D)}, and let $S_1,S_2,S_3,S_4$ be the corresponding total weights. Directly from the definition, $S_1=6\mu\left\lceil\frac{\delta a}{3}\right\rceil+(k-3)\mu\left\lceil\frac{\delta b}{k-3}\right\rceil$, $S_2=(k-3)\mu$, and $S_3=\mu-1$. Moreover, \[S_4=\mu\sum_{i=1}^{k-3}((k-i-1)\lceil\delta a\rceil+(i+1)\lceil\delta a\rceil)=k(k-3)\mu\lceil\delta a\rceil.\]  Substituting $a=\frac{1}{k-1}$ and $b=\frac{3k-5}{3(k-1)}$ into $S_1+S_2+S_3+S_4$ gives $\mu w-1$.
\end{proof}

 Let $H_{k,\mu}$ be the blow-up of $T_{k,\mu}$. By the same argument as in the proof of Theorem \ref{ThmExample1.1}, one can immediately get the diameter of $H_{k,\mu}$ as follows.

\begin{lemma}\label{lemDiamBlowGraph}
$\textup{diam}(H_{k,\mu})=\mu(3k-5)-2$.
\end{lemma}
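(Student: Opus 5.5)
We will argue exactly as in the diameter part of the proof of Theorem \ref{ThmExample1.1}. For each $\ell\in\{1,\ldots,\mu(3k-5)-1\}$, let $L_\ell'$ be the union of the blow-up classes of the vertices of $L_\ell$. The first step is to record that these sets are well behaved. Every layer of $T_{k,\mu}$ is nonempty: by parts \textbf{(A)}--\textbf{(D)}, every index of the form $1+j(3k-5)$, $2+j(3k-5)$, $3+j(3k-5)$, $3i+1+j(3k-5)$, $3i+2+j(3k-5)$, $3i+3+j(3k-5)$ or $j(3k-5)$ is assigned vertices. One checks that these residues cover $1,\ldots,3k-5$ modulo $3k-5$, since $3(k-3)+3=3k-6$ and the junction supplies residue $0$. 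The last layer is $L_{\mu(3k-5)-1}$, which is of type $3(k-3)+3+(\mu-1)(3k-5)$, so the indexing is consistent. Every weight is a positive integer, so each $L_\ell'$ is nonempty. Finally, edges of $H_{k,\mu}$ join only vertices in the same $L_\ell'$ or in consecutive sets, and consecutive sets $L_\ell',L_{\ell+1}'$ are completely joined.

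The second step is the distance computation. For $i<j$, any path from $L_i'$ to $L_j'$ must pass through every intermediate $L_m'$, since an edge changes the layer index by at most one. Hence the distance is at least $j-i$. Choosing one vertex in each intermediate set gives a path of length exactly $j-i$, so the distance equals $j-i$.

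Within a single $L_\ell'$, vertices in distinct blow-up classes are adjacent, while two vertices of the same class share a common neighbour in $L_{\ell\pm1}'$ (which exists because there are at least two layers). Such vertices are therefore at distance at most $2$.

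Thus the largest distance is attained between $L_1'$ and $L_{\mu(3k-5)-1}'$. It equals $\mu(3k-5)-2$, and this is at least $2$ because $\mu\ge2$ and $k\ge7$. This proves $\textup{diam}(H_{k,\mu})=\mu(3k-5)-2$.

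The only step needing care is the bookkeeping that every layer index receives vertices and that the final layer is indeed $L_{\mu(3k-5)-1}$. The distance computation itself is routine.
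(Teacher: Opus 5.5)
Your proposal is correct and follows the paper's own argument, which simply reuses the diameter computation from the proof of Theorem~\ref{ThmExample1.1}: layers are nonempty and consecutive layers are completely joined, so vertices in $L_i'$ and $L_j'$ are at distance $j-i$ and vertices within a layer are at distance at most $2$. Your explicit check that every index $1,\ldots,\mu(3k-5)-1$ receives a vertex, with the last layer of type $3(k-3)+3+(\mu-1)(3k-5)$, fills in bookkeeping the paper leaves implicit.
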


We are now ready to complete the odd-case construction. 
\begin{theorem}\label{ThmExample1.2}
 Let $\mu\ge 2$, $r\ge 4$, $k=2r-1$, and $\delta\ge 5k^4$ be integers. Then there exists a connected $k$-colorable (hence $K_{k+1}$-free) graph $G_{k,\mu}$ with minimum degree $\delta$, order $n=\mu w$, and diameter $\mu(3k-5)-2$. Moreover, for each fixed pair $(k,\delta)$, $\textup{diam}(G_{k,\mu})-\frac{3k-2}{k}\cdot\frac{n}{\delta}\to+\infty$ as $\mu\to+\infty$. 
\end{theorem}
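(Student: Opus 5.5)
The plan is to take $G_{k,\mu}$ to be the blow-up $H_{k,\mu}$ together with one extra vertex. Adding the vertex fixes two things at once: it raises the order from $\mu w-1$ (Lemma~\ref{lemSum1.2Weights}) to $\mu w$, and it creates a vertex of degree exactly $\delta$. This matters because Lemma~\ref{lemTkWeights} only gives $W(v)\ge\delta$, and equality there depends on divisibility.

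\textbf{The extra vertex.} Let $L_\ell'$ denote the union of the blow-up classes of $L_\ell$, as in the proof of Theorem~\ref{ThmExample1.1}. Add a new vertex $v_0$ adjacent to exactly $\delta$ vertices chosen from $L_2'$ together with the blow-up classes of two of the three vertices of $L_1$. This is possible because these classes have total size $2\lceil\frac{\delta a}{3}\rceil+(k-3)\lceil\frac{\delta b}{k-3}\rceil\ge\delta(\frac{2a}{3}+b)=\delta$. It gives the following properties.
\begin{itemize}
\item \emph{Degrees.} $v_0$ has degree exactly $\delta$. Every other vertex has degree at least the corresponding $W(\cdot)\ge\delta$ by Lemma~\ref{lemTkWeights}. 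Hence the minimum degree is exactly $\delta$.
\item \emph{Colouring.} By Lemma~\ref{lemTkChromatic}, colour $T_{k,\mu}$ greedily and lift the colouring to the blow-up. Give $v_0$ the colour of the unused third vertex of $L_1$. That vertex is adjacent in $T_{k,\mu}$ to everything in $L_1\cup L_2$, so this colour appears nowhere in $N(v_0)$. Thus $G_{k,\mu}$ stays $k$-colourable, hence $K_{k+1}$-free.
\item \emph{Connectivity.} $v_0$ has a neighbour in $L_2'$, so $G_{k,\mu}$ is connected.
\end{itemize}

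\textbf{Diameter.} Since $v_0$ has a neighbour in $L_2'$, it is at distance at most $j-1$ from every vertex of $L_j'$ for $j\ge2$, and at distance at most $2$ from $L_1'$. Distances among the old vertices do not decrease below their values in $H_{k,\mu}$, because any path through $v_0$ goes between $L_1'\cup L_2'$ vertices and so cannot shortcut between layers. Therefore Lemma~\ref{lemDiamBlowGraph} gives $\operatorname{diam}(G_{k,\mu})=\mu(3k-5)-2$, using $\mu\ge2$ so that this value is at least $2$.

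\textbf{Asymptotic comparison.} Bound $w$ by removing the ceilings:
\[
w\le \delta\Bigl(2a+b+\tfrac{k(k-3)}{k-1}\Bigr)+C,\qquad C=6+(k-3)+k(k-3)+k-2<k^2 .
\]
The coefficient simplifies to $\frac{3k^2-6k+1}{3(k-1)}$. Then
\[
\operatorname{diam}-\frac{3k-2}{k}\cdot\frac{n}{\delta}\ \ge\ \mu\Bigl[(3k-5)-\frac{(3k-2)(3k^2-6k+1)}{3k(k-1)}-\frac{(3k-2)C}{k\delta}\Bigr]-2 .
\]
Using the identity $3k(k-1)(3k-5)-(3k-2)(3k^2-6k+1)=2$, the bracket equals $\frac{2}{3k(k-1)}-\frac{(3k-2)C}{k\delta}$.

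\textbf{Main obstacle.} The hard step is showing this bracket is positive, since the per-period gain $\frac{2}{3k(k-1)}$ is tiny and must beat the rounding loss. It suffices to have $\delta>\frac{3(3k-2)(k-1)C}{2}$. The right-hand side is below $\frac{9}{2}k^4$, so the hypothesis $\delta\ge5k^4$ is enough. With the bracket a positive constant, the difference tends to $+\infty$ as $\mu\to\infty$.
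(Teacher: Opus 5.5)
Your proof is correct and follows the paper's argument: add one vertex whose neighbourhood is a $\delta$-subset of an existing vertex's neighbourhood, give it that vertex's colour, check that old distances are preserved, and use $w\le\delta f_k+h_k$ to get the per-period gain $\gamma_k=\frac{2}{3k(k-1)}$. The only difference is that you clone a vertex $x$ of the unused class of $L_1$ instead of a middle-layer vertex (so $N(v_0)\subseteq N_{H_{k,\mu}}(x)$, and the paper's replacement argument makes your ``no shortcut'' claim rigorous). This works because $2\lceil\frac{\delta a}{3}\rceil<\delta$ forces $v_0$ to have a neighbour in $L_2'$, which you should state explicitly. Also, your distance bound $j-1$ fails for $j=2$ and should be relaxed to $2$ there, which is harmless since $\beta\ge 30$.
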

\begin{proof}
Set $\theta=\mu(3k-5)-1$, and let $L_1',\ldots,L_{\theta}'$ be the blown-up layers of $H_{k,\mu}$. Let $c=\left\lceil\frac{\theta}{2}\right\rceil$. Choose any vertex $x\in L_{c}'$. By Lemma \ref{lemTkWeights},  $|N_{H_{k,\mu}}(x)|\ge \delta$. Then choose a set $X\subseteq N_{H_{k,\mu}}(x)$ of size $\delta$. Let $G_{k,\mu}$ be the graph obtained from $H_{k,\mu}$  by adding a new vertex $y$ such that $N_{G_{k,\mu}}(y)=X$. 

As in the proof of Theorem \ref{ThmExample1.1}, $H_{k,\mu}$ is connected and has the same chromatic number as $T_{k,\mu}$. It is therefore $k$-colorable by Lemma \ref{lemTkChromatic}. Since every vertex of $X$ is adjacent to $x$, assigning $y$ the same color as $x$ preserves a proper $k$-coloring.  Thus $G_{k,\mu}$ is $k$-colorable and hence $K_{k+1}$-free. Lemma \ref{lemTkWeights} shows that the minimum degree of $H_{k,\mu}$ is at least $\delta$, while $d_{G_{k,\mu}}(y)=\delta$. Hence $\delta(G_{k,\mu})=\delta$. Lemma \ref{lemSum1.2Weights} gives $|V(H_{k,\mu})|=\mu w-1$. So $n=|V(G_{k,\mu})|=\mu w$.

Let $\beta=\textup{diam}(H_{k,\mu})=\theta -1$. We first prove that adding $y$ does not change any distance between old vertices. Let $u,v\in V(H_{k,\mu})$. Since $H_{k,\mu}$ is a subgraph of $G_{k,\mu}$, $d_{G_{k,\mu}}(u,v)\le d_{H_{k,\mu}}(u,v)$. Conversely, take a shortest $u$--$v$ path in $G_{k,\mu}$. If it avoids $y$, it is already a path in $H_{k,\mu}$. If it uses $y$, it contains a segment $u_1$--$y$--$u_2$ with $u_1,u_2\in X$. Replacing this segment by $u_1$--$x$--$u_2$ gives a walk of the same length in $H_{k,\mu}$; deleting any closed subwalk produces a path no longer than the original one. Therefore $d_{H_{k,\mu}}(u,v)\le d_{G_{k,\mu}}(u,v)$. Thus equality holds for all old vertices $u,v$. In particular, two vertices in the end layers remain at distance $\beta$, and hence $\textup{diam}(G_{k,\mu})\ge \beta$. 

Because $x\not\in X$ and $X\neq \emptyset$, we have $d_{G_{k,\mu}}(y,x)=2$. For an old vertex $u$ in a layer with index $i\ne c$, \[d_{H_{k,\mu}}(x,u)=|i-c|\le\textup{max}\{c-1,\theta-c\}=\left\lceil\frac{\beta}{2}\right\rceil.\]
If $u$ lies in $L_c'$ but in a blown-up class different from that of $x$, then $d_{H_{k,\mu}}(x,u)=1$. If $u$ lies in the same blown-up class as $x$, then $d_{H_{k,\mu}}(x,u)\le 2$. Since $k\ge 7$ and $\mu\ge 2$ imply $\beta\ge 30$, all three cases give $d_{H_{k,\mu}}(x,u)\le \left\lceil\frac{\beta}{2}\right\rceil$. Therefore,
 \[d_{G_{k,\mu}}(y,u)\le d_{G_{k,\mu}}(y,x)+d_{H_{k,\mu}}(x,u)\le 2+\left\lceil\frac{\beta}{2}\right\rceil\le \beta.\]
 Together with Lemma \ref{lemDiamBlowGraph},  this proves $\textup{diam}(G_{k,\mu})=\beta=\mu(3k-5)-2$.

It remains to compare $\beta$ with the conjectured expression. Let $f_k=k-1-\frac{2}{3(k-1)}$, $h_k=k^2-k+1$, and $\gamma_k=(3k-5)-\frac{3k-2}{k}f_k$. Since  $\lceil s\rceil<s+1$ for every real number $s$, the definition of $w$ gives $w\le \delta f_k+h_k$. A direct simplification yields $\gamma_k=\frac{2}{3k(k-1)}$.  Since $k\ge 7$, we have $\frac{3k-2}{k}<3$ and $h_k<k^2$. Since $\delta\ge 5k^4$, it follows that \[\frac{3k-2}{k}\cdot\frac{h_k}{\delta}<\frac{3k^2}{5k^4}<\frac{2}{3k^2}<\gamma_k.\]
Consequently, 
\begin{align*}
\textup{diam}(G_{k,\mu})-\frac{3k-2}{k}\cdot\frac{n}{\delta}&=\mu\left(3k-5-\frac{(3k-2)w}{k\delta}\right)-2\\
&\ge \mu\left(3k-5-\frac{(3k-2)(\delta f_k+h_k)}{k\delta}\right)-2\\
&\ge \mu\left(\gamma_k-\frac{3k-2}{k}\cdot\frac{h_k}{\delta}\right)-2.
\end{align*}
The coefficient of $\mu$ is positive, so the displayed difference tends to $+\infty$ as $\mu\to+\infty$. 
\end{proof}

\begin{remark}
We still do not attempt to optimize the lower bound of $\delta$. In fact, a similar construction yields counterexamples to Conjecture \ref{Conj2021Czaba}  for every $k=2r-1\ge 13$ and every $\delta\ge 25k^2$. For brevity, we omit the details.
\end{remark}

\section*{Declarations}
The authors declare that they have no known competing financial interests or personal relationships that could have appeared to influence the work reported in this paper.
\section*{\bf\Large Availability of Data and Materials}  
Not applicable.
\section*{Acknowledgments}
This research is supported by National Key R\&D Program of China under grant number 2024YFA1013900, NSFC under grant numbers 12471327 and 12401454, Natural Science Foundation of Fujian Province under grant number 2024J01875, Science-Technology Foundation of Putian University under grant number 2023059.


\end{document}